\theoremstyle{plain}
\newtheorem{theorem}{Theorem}
\newtheorem{qs}[theorem]{The Quillen-Suslin Theorem}
\theoremstyle{definition}
\newtheorem{review}[theorem]{Review}
\newtheorem{dandn}[theorem]{Definitions and Notation}
\newtheorem{basics}[theorem]{Lemma}
\newcommand{\ZZ}{{\mathbb Z}}
\newcommand{\NN}{{\mathbb N}}
\renewcommand{\phi}{\varphi}
\renewcommand{\ge}{\geqslant}
\renewcommand{\epsilon}{\varepsilon}
\numberwithin{equation}{theorem}
\def\d1{\discretionary{-}{}{-}}
\begin{document}

\title{Seminar\hspace{4pt}on\hspace{4pt}Quillen's\hspace{4pt}proof\hspace{4pt}of\hspace{4pt}the\hspace{4pt}Quillen-Suslin\hspace{4pt}Theorem}

\author{Warren Dicks}
\address{Departament de Matem{\`a}tiques, \newline \indent Universitat Aut{\`o}noma de Barcelona, \newline \indent
 E-08193 Bellaterra (Barcelona),  SPAIN \newline \indent \vspace{-5pt}\null}
\email{dicks@mat.uab.cat \newline \indent  \normalfont {\emph {Home page}}:\,\,\,\,\url{http://mat.uab.cat/~dicks/}}

\null 

\maketitle
 
\vspace{-.2cm}

Let $K$ be a field, and $n$ be a positive integer.     \cite[p.\,243]{Serre1955}
remarked ``On ignore s'il existe des \mbox{$K[x_1,  \ldots, x_n]$}-modules projectifs de type fini qui ne soient
pas libres.".
Motivated by this remark, \cite{Quillen1976} and  \cite{Suslin1976} independently 
 showed that  every finitely generated, 
projective  \mbox{$K[x_1,  \ldots, x_n]$}-mod\-ule is free, a 
result that is now known as  \textit{the Quillen\d1Suslin theorem}.
 \cite{Lam2003} gave  an extensive  collection of  proofs and related results that  
were available 30 years later.

We construct an idem\-potent\d1matrix\d1based version of Quillen's proof 
by translating most of Paul Roberts'   
geometry-free proof of   Horrocks' theorem  and  translating part of
Moshe Roitman's  proof of  Quillen's extension theorem.  
 We described all of this  in the Bedford College Ring Theory Study Group in the autumn term of 1977,
 and gave the  
participants copies of the three-page handout \cite{Dicks1977}  later
cited  by      \mbox{\cite[pp.$\mkern1mu160,\mkern0mu201$]{Lam2003}}.   
The present write-up  includes minor changes and
three pages of introductory material.

 \begin{dandn} \noindent $(i).$  As Bourbaki intended, we let 
$\mathbb{N}$ denote the set of finite cardinals; that is, $\mathbb{N}:= \{0,1,2,,\ldots\}$. 

 \smallskip

\noindent $(ii).$  By a \textit{ring} $R$, we  mean an associative ring with $1$.
A left $R$-module $M$ is said to be \textit{$R$-free} (resp.\,\textit{$R$-projective},
  resp.\,\textit{finitely $R$-generated}) if  $M$ is a free (resp.\,projective,  resp.\,finitely generated)
  left $R$-module.  For any subring $R_0$ of~$R$,  if 
\mbox{$M \underset{R}{\simeq}  R \otimes_{R_0} M_0$} 
for some left $R_0$-mod\-ule~$M_0$,  
then, for any ideal $J$~of $R$, we  have 
\mbox{$(R/J) \otimes _R M  
\underset{R/J}{\simeq}   (R/J)  \otimes_{R_0} M_0.$} 
If, moreover, the natural map \mbox{$R_0 \to R/J$} is bijective, then  
\mbox{$R_0 \otimes _R M \underset{R_0}{\simeq} M_0$}; here,
 $M_0$ is  uniquely determined by~$M$  up to $R_0$-isomorphism, and,
 if $M$ is a finitely generated, projective left $R$-module,
then $M_0$ is a finitely generated, projective left $R_0$-module.

\smallskip

\noindent $(iii).$  Whenever $R$ is a commutative ring, the following will apply: \mbox{$R[x]$}  denotes 
the polynomial ring over~$R$ in one central variable denoted $x$;    \mbox{$R(x)$}  denotes the
localization of \mbox{$R[x]$} at the set of all monic polynomials in \mbox{$R[x]$};  
 \mbox{$\operatorname{Max}(R)$}  denotes the set of all
maximal proper ideals of $R$; and, for each \mbox{$\mathfrak{m} \in \operatorname{Max}(R)$}, 
  \mbox{$R_{\mathfrak{m}}$}~denotes 
the localization of $R$ at \mbox{$R{-}\mathfrak{m}$}, the complement  of $\mathfrak{m}$ in $R$. 
 Also, we shall say the following: $R$ is a \textit{domain} if $R$~is nonzero and $R$ 
has no nonzero zerodivisors;  $R$ is a \textit{Dedekind domain} if $R$ is a domain and,  
for each $\mathfrak{m} \in \operatorname{Max}(R)$, the localization $R_{\mathfrak{m}}$ is a PID;
and  $R$ is \textit{local} if  $R$ has a unique maximal proper ideal.
\qed
\end{dandn}

\vspace{-2mm}

\begin{review}\label{rev}
Let $R$ be a ring.

\smallskip

\noindent $(i).$ Let \mbox{$m,\,n \in \NN$}.  We write \mbox{$\null^m\mkern-3muR\mkern2mu^n$} to 
denote the set of all $m \times n$ matrices over $R$. 
We give \mbox{$\null^n\mkern-3muR\mkern2mu^n$} the  standard ring structure.
We set  \mbox{$R\mkern2mu^n := \null^1\mkern-3muR\mkern2mu^n$}, a free left $R$-module.
For each \mbox{$A \in \null^m\mkern-3muR\mkern2mu^n$}, right multiplication by $A$ gives an $R$-linear map
 \mbox{$R\mkern2mu^m \overset{A\,\,}{\to} R\mkern2mu^n$}, whose image will be denoted \mbox{$R\mkern2mu^m{\cdot}A$}. 
Conversely, every $R$-linear map \mbox{$ R\mkern2mu^m  \overset{\alpha}{\to}  R\mkern2mu^n$} is given by 
right multiplication by 
a unique \mbox{$A \in \null^m\mkern-3muR\mkern2mu^n$}; we shall view $A$ and $\alpha$ as 
  interchangeable. \qed

\smallskip

\noindent $(ii).$  Let  \mbox{$n \in \NN$}  and  \mbox{$E \in \null^n\mkern-3muR\mkern2mu^n$} 
such that \mbox{$E\mkern1mu^2 =E$}; we say that $E$ is  \textit{idempotent}.  We  now show that
\mbox{$R\mkern2mu^n{\cdot}E$} is a finitely generated, projective left $R$-module.

It suffices to show that    \mbox{$R\mkern2mu^n = 
R\mkern2mu^n{\cdot}E \oplus  R\mkern2mu^n{\cdot}(I_n{-}E)$} as left \mbox{$R$}-mod\-ules.
It is clear that  \linebreak\mbox{$R\mkern2mu^n = 
R\mkern2mu^n{\cdot}E +  R\mkern2mu^n{\cdot}(I_n{-}E)$}, since each \mbox{$A \in R\mkern2mu^n$} equals
  \mbox{$A{\cdot}E + A {\cdot}(I_n{-}E)$}.
To see that  \linebreak\mbox{$R\mkern2mu^n{\cdot}E \cap  R\mkern2mu^n{\cdot}(I_n{-}E) = \{0\}$},
notice that this intersection is annihilated by right multiplication by \mbox{$I_n{-}E$} and by 
right multiplication by \mbox{$E$}; hence it is annihilated by right multiplication by \mbox{$E + (I_n{-}E) = I_n$}.
The assertion is now proved.  \qed

\smallskip

\noindent $(iii).$  Let $P$ be a finitely generated, projective left $R$-module; thus,  there 
exist  an \mbox{$n \in \NN$} and a left $R$-module $Q$ such that \vspace{-1.2mm}  
 \mbox{$R^n \underset{R}{\simeq} P{\oplus}Q$}.
We   now show that there exists  an  idempotent \mbox{$E \in \null^n\mkern-3muR\mkern2mu^n$}
such that \mbox{$P \underset{R}{\simeq} R\mkern2mu^n{\cdot}E$}.  

We have a surjective $R$-linear map
\mbox{$\alpha{:\,}R\mkern2mu^n  \simeq  P{\oplus}Q \hskip-2pt\to\mkern-13mu\to P$} and
an injective $R$-linear map \mbox{$\beta{:\,}P \hookrightarrow P{\oplus}Q \simeq R\mkern2mu^n$} such that
 \mbox{$\beta {\cdot} \alpha = 1_P$}, where composition is to be read from left to right, and maps
are to be applied on the right of their arguments.   We view the $R$-linear map 
\mbox{$\alpha{\cdot}\beta{:\,} R\mkern2mu^n \to R\mkern2mu^n$}
as an  element $E$~of~\mbox{$\null^n\mkern-3muR\mkern2mu^n$}.
Then  \mbox{$E^2 = \alpha{\cdot}\beta{\cdot} \alpha{\cdot}\beta = 
\alpha{\cdot}1_P{\cdot}\beta = \alpha{\cdot}\beta = E$}; thus,
$E$~is idempotent.  Also, \mbox{$P = (R\mkern2mu^n)\alpha
 \underset{R}{\simeq}  ((R\mkern2mu^n)\alpha)\beta =  R\mkern2mu^n{\cdot}E$}.\qed

\smallskip

\noindent $(iv).$  For idempotent matrices $E$,  $F$ over $R$, \vspace{-1.4mm}
we write \mbox{$E \underset{R}{\sim} F$} to mean that
there exist matrices \mbox{$A$, $B$} over $R$  such that \mbox{$A{\cdot}B = E$} and \mbox{$B{\cdot}A = F$}. 

Let $m$ and $n$ be elements of $\NN$,   $E$  an idempotent element of 
 \mbox{$\null^m\mkern-3muR\mkern2mu^m$},
and   $F$   an idempotent element  of  \mbox{$\null^n\mkern-3muR\mkern2mu^n$}.
 We  now show   that \mbox{$E \underset{R}{\sim} F$} if and only if 
\mbox{$R\mkern2mu^m{\cdot}E \underset{R}{\simeq} R\mkern2mu^n{\cdot}F$}.

Consider first the situation where \mbox{$E \underset{R}{\sim} F$}.  Here, there exist  
   \mbox{$A \in \null^m\mkern-3muR\mkern2mu^n$}
and \mbox{$B \in \null^n\mkern-3muR\mkern2mu^m$} such that \mbox{$A{\cdot}B = E$} and \mbox{$B{\cdot}A = F$}. 
One fact that we shall use repeatedly throughout these notes is that  
 \mbox{$E{\cdot}A = A{\cdot}F$}  here,  since both sides equal \mbox{$A{\cdot}B{\cdot}A$}.
Now \mbox{$A$} may be viewed as the $R$-linear map \mbox{$R\mkern2mu^m \to R\mkern2mu^n$}  given by right 
multiplication by~$A$.  The \mbox{$R$}-submodule \mbox{$R\mkern2mu^m{\cdot}E$} of \mbox{$R\mkern2mu^m$}
is carried by~$A$ to \mbox{$(R\mkern2mu^m{\cdot}E){\cdot}A =   (R\mkern2mu^m{\cdot}A){\cdot}F \subseteq  
R\mkern2mu^n{\cdot}F$}, giving an $R$-linear map \mbox{$\alpha{:\,}R\mkern2mu^m{\cdot}E \to R\mkern2mu^n{\cdot}F$}.  
Similarly, $B$~gives an $R$-linear map  \mbox{$\beta{:\,}R\mkern2mu^n{\cdot}F \to R\mkern2mu^m{\cdot}E$}.    The total 
effect of applying $\alpha$ and then $\beta$ to \mbox{$R\mkern2mu^m{\cdot}E$} is to right multiply by $A{\cdot}B = E$, 
which fixes all elements of \mbox{$R\mkern2mu^m{\cdot}E$}.  Similarly $\beta{\cdot}\alpha$ acts as the identity 
endomorphism of \mbox{$R\mkern2mu^n{\cdot}F$}.  Hence, 
\mbox{$R\mkern2mu^m{\cdot}E \underset{R}{\simeq} R\mkern2mu^n{\cdot}F$}.

Finally, suppose that we have an $R$-linear isomorphism  
 \mbox{$R\mkern2mu^m{\cdot}E \overset{\alpha\,}{\to} R\mkern2mu^n{\cdot}F$}. 
The composite \mbox{$R\mkern2mu^m \xrightarrow{E} R\mkern2mu^m{\cdot}E \xrightarrow{\alpha} 
 R\mkern2mu^n{\cdot}F \subseteq R\mkern2mu^n$} acts as right multiplication by a unique
\mbox{$A \in \null^m\mkern-3muR\mkern2mu^n$}.  The composite 
\mbox{$R\mkern2mu^n \xrightarrow{F} R\mkern2mu^n{\cdot}F \xrightarrow{\alpha^{-1}} 
 R\mkern2mu^m{\cdot}E \subseteq R\mkern2mu^m$} acts as right multiplication by a unique
\mbox{$B \in \null^n\mkern-3muR\mkern2mu^m$}.  The  composite \mbox{$A{\cdot}B$}
equals the composite \mbox{$R\mkern2mu^m \xrightarrow{E} R\mkern2mu^m{\cdot}E  \subseteq R\mkern2mu^m$}, which  
is right multiplication by~$E$;  hence,  \mbox{$A{\cdot}B = E$}.  Similarly \mbox{$B{\cdot}A = F$}.
Thus,  \mbox{$E \underset{R}{\sim} F$}. \qed

\smallskip

\noindent $(v).$ It is now clear that \mbox{$\underset{R}{\sim}$} is an equivalence relation on the set of all 
idempotent (square) matrices over $R$.  Also, the  set of equivalence classes is in bijective correspondence 
with the collection of all isomorphism classes of all finitely generated, projective left $R$-modules. 
 \qed\,\,$\Box$ 
\end{review}

\begin{basics}\label{basics} Let $R$ be a  local  commutative ring.  Let 
  $\mathfrak{m}$  denote  the maximal proper ideal of~$R$.  Set \mbox{$\overline R := R/\mathfrak{m}$}, the
residue field, and  let \mbox{$R \to \overline R$}, \mbox{$r \mapsto \overline r$},  denote the quotient map.
We extend the overline notation  to polynomials and matrices. 
We identify \mbox{$R(x)/(\mathfrak{m}{\cdot} R(x))$}  with~\mbox{$\overline R(x)$}, 
which is the field of fractions of \mbox{$\overline R[x]$}. 

\smallskip

\noindent $(i)$.  Each element of $R(x)$ equals \mbox{$\frac{f}{g}$}   
for some \mbox{$f,\, g \in R[x]$} with $g$~monic.  The  \vspace{.5mm}  polynomial  division algorithm says that,
for any \mbox{$f,\, g \in R[x]$} with $g$~monic, there exist 
 unique  \mbox{$q,\,r \in R[x]$} such that 
\mbox{$f = g{\cdot}\,q +r$} and  \mbox{$\operatorname{deg}r < \operatorname{deg}g$}, 
where \mbox{$\operatorname{deg} 0 := -\infty$};
here, $q$ \vspace{.3mm} is uniquely determined by~\mbox{$\frac{f}{g}$}, and we call
$q$ the \textit{polynomial part of} $\frac{f}{g}$, denoted  \mbox{$[\frac{f}{g}]$}.  The map
 \mbox{$R(x) \to R[x]$}, 
\mbox{$\frac{f}{g} \mapsto [\frac{f}{g}]$}, is an $R$-linear retraction.  We denote the kernel by 
$R(x)_\circ:=\{\frac{f}{g}\in R(x):\operatorname{deg}f < \operatorname{deg}g\}$, 
and  obtain an $R$-module  decomposition  \mbox{$R(x)= R[x]\oplus R(x)_\circ$}.  \qed

\smallskip

\noindent $(ii)$.  For  $r \in R$, if $\overline r \ne \overline 0$, then $r \notin \mathfrak{m}$, hence 
$R\mkern2mur$ is not a proper ideal of $R$,  and  hence  $r$ is invertible in~$R$. \qed

\smallskip

\noindent $(iii)$.  Let \mbox{$n \in \NN$}.  We now show that, for each invertible 
\mbox{$A \in \null^n\mkern-3mu \overline R(x)\mkern0mu^n$}, there exists an invertible 
\mbox{$B \in  \null^n\mkern-3mu R(x)\mkern0mu^n$} such that \mbox{$\overline B = A$}; we say that 
the invertible matrix over \mbox{$\overline R(x)$}
\textit{lifts back} to some invertible matrix over $R(x)$.

By  $(ii)$ above, each invertible element of the field $\overline R$ lifts back to some invertible 
element of~$R$.  It then follows that each invertible element of the field \mbox{$\overline{R}(x)$}
 lifts back to some invertible element of~\mbox{$R(x)$}.  
It then follows that each invertible elementary matrix over \mbox{$\overline{R}(x)$}
 lifts back to some invertible elementary matrix over~\mbox{$R(x)$}. 
Over the field \mbox{$\overline{R}(x)$}, each invertible matrix is a product of 
invertible elementary matrices, and, hence, lifts back to some invertible  matrix over~\mbox{$R(x)$}. \qed 

\smallskip

\noindent $(iv)$. Let $W$ be an $R$-module, and  $W'$ be  an $R$-submodule of $W$ such that, firstly,
\mbox{$W/W'$}~is finitely $R$-generated, and, second,
\mbox{$\mathfrak{m}{\cdot}W  {+} W'  = W$}. 
We shall now show that \mbox{$W' = W$}. 
(This is  the local commutative case of one form of Nakayama's Lemma.)

Since \mbox{$W/W'$}~is finitely $R$-generated,  
there exists a smallest \mbox{$n \in \NN$}  such that there exist   \mbox{$w_1,  \ldots, w_n \in W$} 
such that  \mbox{$ R{\cdot}w_1{+}   \cdots {+} R{\cdot}w_n    +W'   = W$}. We want to show that \mbox{$n = 0$}.
\linebreak Suppose that \mbox{$n \ge 1$}; it suffices to obtain a contradiction.
Set \mbox{$W'':= R{\cdot}w_2{+} \cdots {+} R{\cdot}w_n  {+} W'$.}  Then  
\mbox{$W = R{\cdot}w_1 {+} W''$}. By the 
minimality  of~$n$, \mbox{$W'' \ne W$}; hence, \mbox{$w_1 \not\in W''$}.
Now \linebreak  \mbox{$w_1 \in W = \mathfrak{m}{\cdot}W {+} W'   =  
  \mathfrak{m}{\cdot}(R{\cdot}w_1 {+} W'') {+} W'  \subseteq  \mathfrak{m}{\cdot}w_1  {+} W''$}.
Hence,  there exists some \mbox{$r  \in \mathfrak{m}$} such that \mbox{$w_1   \in  r {\cdot} w_1 {+} W''$}, and than
  \mbox{$(1{-}r){\cdot}w_1 \in W''$}.  
 By~$(ii)$ above, \mbox{$1{-} r$} is invertible in~$R$, since 
\mbox{$\overline {1{-}r} = \overline 1 {-} \overline 0 \ne \overline 0$}.
  Hence,   \mbox{$w_1  \in (1{-}r)^{-1}{\cdot} W'' \subseteq W''$},  
which contradicts \mbox{$w_1 \not\in W''$}.  \qed

\smallskip

\noindent $(v)$.    We now give an elementary proof that if the local commutative ring $R$ is a  PID,  
then   $R(x)$   is a PID.  (This is the local case of \cite[Lemma 2]{Quillen1976}.)

If $R$ is a field, then $R(x)$ is the field of fractions of $R[x]$, which is clearly a PID;
thus, we may  assume that the local PID $R$ is not a field.  Here \mbox{$\mathfrak{m} = R{\cdot}p$} for some nonzero~\mbox{$p \in R$}. 
Then $p$ is irreducible in $R$, and each nonzero element of $R$ has a unique expression 
as~\mbox{$u{\cdot}p^n$} where \mbox{$n \in \NN$} and \mbox{$u \in R{-}R{\cdot}p$}.
  Each nonzero element of  \mbox{$R(x)$} has a unique  expression as~\mbox{$f{\cdot}p^n$}
where \mbox{$n \in \NN$} and \mbox{$f \in R(x){-}R(x){\cdot}p$}.  
We identified \mbox{$R(x)/(R(x){\cdot}p) = \overline R(x)$}, the field of fractions 
of~\mbox{$\overline R[x]$}; thus,  \mbox{$R(x){\cdot}p \in \operatorname{Max}(R(x))$}.

Let $J$ be an arbitrary ideal of $R(x)$.  We wish to show that $J$ is a principal ideal of $R(x)$.  
We may assume that \mbox{$J \ne \{0\}$}.    
Let $K$ denote the field of fractions of $R$. Then \mbox{$R(x)$} lies in the field of fractions \mbox{$K(x)$} of  
\mbox{$K[x]$}.  The subring \mbox{$R(x){\cdot}K$} of \mbox{$K(x)$} is a PID, since it
is the  localization of the PID \mbox{$K[x]$} at the set of monic polynomials in \mbox{$R[x]$}.  
As \mbox{$J{\cdot}K$} is a nonzero ideal of the PID \mbox{$R(x){\cdot}K$}, there exists  some nonzero 
\mbox{$f \in J{\cdot}K$} such that \mbox{$J{\cdot}K =  R(x){\cdot}K{\cdot}f$}.  We may  arrange that 
\mbox{$f \in J {-} J{\cdot}p$}.
It suffices to show that \mbox{$J = R(x){\cdot}f$}.  It is clear that \mbox{$R(x){\cdot}K{\cdot}f \supseteq J   \supseteq R(x){\cdot}f$}.
Consider any nonzero element of \mbox{$J$} and express it as \mbox{$g{\cdot}f$} for some \mbox{$g \in R(x){\cdot}K$};
it suffices to show that \mbox{$g \in R(x)$}.  We may write \mbox{$g = p^{-n}{\cdot}g'$}, where \mbox{$n \in \ZZ$} 
  and \mbox{$g' \in R(x){-}R(x){\cdot}p$}; it suffices to show that \mbox{$-n \ge 0$}.  Suppose that
\mbox{$n \ge 1$}; it suffices to obtain a contradiction.  
Since \mbox{$R(x){\cdot}p \in \operatorname{Max}(R(x))$},
we see that \mbox{$R(x){\cdot}p + R(x){\cdot}g' = R(x)$}.  Thus,  $p$~and (hence) $p^n$ become invertible in 
\mbox{$R(x)/(R(x){\cdot}g')$}.
As \mbox{$g'{\cdot}f = p^{n}{\cdot}g{\cdot}f \in p^{n}{\cdot}J$}, we see that \mbox{$g'$, $p^n$,} 
and (hence) $1$ all lie in the ideal
  \mbox{$\{h \in R(x) : h{\cdot}f \in p^{n}{\cdot}J  \}$} of $R(x)$. Now
\mbox{$1{\cdot}f \in p^{n}{\cdot}J$}, and this  contradicts 
\mbox{$f \not\in J{\cdot}p$}.   \qed\,\,$\Box$ 
\end{basics}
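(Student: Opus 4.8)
The plan is to prove $(i)$ through $(v)$ in the stated order, since each rests on the ones before it; the organizing principle is that in a local ring the maximal ideal is exactly the set of non-units, so reduction modulo $\mathfrak{m}$ faithfully records invertibility. For $(i)$ I would carry out ordinary long division by the monic polynomial $g$, the point being that monicity means no coefficient of $g$ is ever inverted, so the classical algorithm runs verbatim over $R$. Existence of $q,r$ follows by induction on $\deg f$, repeatedly subtracting a monomial multiple of $g$ to lower the leading term; uniqueness follows because the difference of two candidate remainders is a polynomial of degree less than $\deg g$ that is a multiple of the monic $g$, hence $0$. A short check shows $q$ depends only on $\frac{f}{g}$ and not on the representative, after which $R$-linearity of $\frac{f}{g}\mapsto[\frac{f}{g}]$ and the identification of its kernel as $R(x)_\circ$ deliver the decomposition \mbox{$R(x)=R[x]\oplus R(x)_\circ$}. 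For $(ii)$, $\overline r\ne\overline 0$ gives $r\notin\mathfrak{m}$; since $\mathfrak{m}$ is the only maximal ideal, the ideal $Rr$ cannot be proper, so $Rr=R$ and $r$ is a unit.

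The core of $(iii)$ is a four-stage lift. Units of $\overline R$ lift to units of $R$ by $(ii)$. A nonzero element of the field $\overline R(x)$ is a ratio of two polynomials of $\overline R[x]$; choosing lifts of the same degrees whose leading coefficients are units of $R$ (possible by $(ii)$, since the leading coefficients are nonzero in the field $\overline R$), I obtain polynomials of $R[x]$ with unit leading coefficient, each therefore a unit in $R(x)$, so that their ratio is a unit of $R(x)$ reducing to the given element. An invertible elementary matrix over $\overline R(x)$ is a transvection \mbox{$I_n+cE_{ij}$} or a scaling; a transvection lifts by choosing any preimage of $c$ under the surjection \mbox{$R(x)\to\overline R(x)$}, and a scaling lifts by the previous step, the lift being in each case an invertible elementary matrix reducing correctly. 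Finally, over the field $\overline R(x)$ every invertible matrix factors as a product of invertible elementary matrices; lifting each factor and multiplying produces an invertible lift, which settles $(iii)$.

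For $(iv)$ I would take a generating set \mbox{$w_1,\dots,w_n$} of $W$ modulo $W'$ of least size $n$ and aim for $n=0$. If $n\ge1$, set \mbox{$W''=Rw_2+\cdots+Rw_n+W'$}; minimality gives \mbox{$w_1\notin W''$}, yet \mbox{$w_1\in\mathfrak{m}W+W'\subseteq\mathfrak{m}w_1+W''$} forces \mbox{$(1-r)w_1\in W''$} for some \mbox{$r\in\mathfrak{m}$}, and $1-r$ is a unit by $(ii)$, so \mbox{$w_1\in W''$}, a contradiction; hence \mbox{$W=W'$}. For $(v)$, when $R$ is a field $R(x)$ is the field of fractions of $R[x]$, hence a PID. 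Otherwise $R$ is a local PID that is not a field, so \mbox{$\mathfrak{m}=Rp$} for a nonzero $p$; then $R(x)p$ is maximal because \mbox{$R(x)/R(x)p=\overline R(x)$} is a field, and each nonzero element of $R(x)$ factors uniquely as \mbox{$f{\cdot}p^n$} with \mbox{$f\in R(x)-R(x)p$}. Viewing $R(x)$ inside $K(x)$, with $K$ the field of fractions of $R$, the overring \mbox{$R(x){\cdot}K$} is a localization of the PID $K[x]$, hence a PID, so a given nonzero ideal $J$ extends to a principal ideal \mbox{$J{\cdot}K=R(x){\cdot}K{\cdot}f$}. I would normalize the generator so that \mbox{$f\in J-J{\cdot}p$} and then prove \mbox{$J=R(x){\cdot}f$}: the inclusions \mbox{$R(x){\cdot}f\subseteq J\subseteq R(x){\cdot}K{\cdot}f$} reduce this to showing that whenever \mbox{$g{\cdot}f\in J$} with \mbox{$g=p^{-n}{\cdot}g'$}, \mbox{$g'\in R(x)-R(x)p$} and \mbox{$n\in\ZZ$}, one has \mbox{$n\le0$}.

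The step I expect to be most delicate is this last contradiction inside $(v)$. Assuming \mbox{$n\ge1$}, maximality of $R(x)p$ gives \mbox{$R(x){\cdot}g'+R(x){\cdot}p=R(x)$} and hence \mbox{$R(x){\cdot}g'+R(x){\cdot}p^n=R(x)$}; since \mbox{$g'{\cdot}f=p^n{\cdot}(g{\cdot}f)\in p^n{\cdot}J$} and \mbox{$p^n{\cdot}f\in p^n{\cdot}J$}, both $g'$ and $p^n$ lie in the ideal \mbox{$\{h\in R(x):h{\cdot}f\in p^n{\cdot}J\}$}, so $1$ does too, whence \mbox{$f\in p^n{\cdot}J\subseteq J{\cdot}p$}, contradicting \mbox{$f\in J-J{\cdot}p$}. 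Arranging the generator to be $p$-primitive while staying in $J$, and then extracting this contradiction, is the only place where the valuation structure of the ring and the identification \mbox{$R(x)/R(x)p=\overline R(x)$} both genuinely enter; the remaining parts $(i)$–$(iv)$ are essentially formal consequences of monic division together with the unit criterion $(ii)$.
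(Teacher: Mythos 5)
Your proposal is correct and follows essentially the same route as the paper in all five parts: monic long division for $(i)$, the unit criterion for $(ii)$, the four-stage elementary-matrix lift for $(iii)$, the minimal-generating-set Nakayama argument for $(iv)$, and, for $(v)$, the same passage to the PID $R(x){\cdot}K \subseteq K(x)$, the same normalization $f \in J - J{\cdot}p$, and the same contradiction via the ideal $\{h \in R(x) : h{\cdot}f \in p^{n}{\cdot}J\}$. The only divergence is expository: you spell out the unit-leading-coefficient lifting of elements of $\overline{R}(x)$ in $(iii)$, a detail the paper leaves implicit.
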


\null \vspace{-9mm}  

 By \cite[Theorem\,3]{Quillen1976}, if $R$ is a   commutative ring     and     $P$ is a finitely
generated, projective $R[x]$-module such that \mbox{$R(x){\otimes}_{R[x]}P$} is $R(x)$-free, 
then $P$ is $R[x]$-free.    
Using \cite[Theorem\,1]{Horrocks1964}, \mbox{M.\hspace{3pt}Pavaman\hspace{3pt}Murthy}
 \cite[\mbox{Lemma on p.\hspace{1pt}24}]{Bass1973} had proved the case where $R$ is  noetherian and local,
which   is already sufficient for our main purpose.  Here,  we shall prove  the case where $R$ is local.

\begin{theorem}\label{th:Horr} If $R$ is a  local commutative ring     and     $P$ is a finitely
generated, projective $R[x]$-module such that \mbox{$R(x){\otimes}_{R[x]}P$} is $R(x)$-free, then $P$ is $R[x]$-free.
\end{theorem}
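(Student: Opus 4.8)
The plan is to represent $P$ by an idempotent and reduce the statement to the construction of a single well-chosen map. By Review~\ref{rev}$(iii)$, write $P \simeq R[x]^n{\cdot}E$ for an idempotent $E \in {}^n R[x]^n$. Since $\overline{R} := R/\mathfrak{m}$ is a field, $\overline{R}[x]$ is a PID, so the projective $\overline{R}[x]$-module $\overline{P} := \overline{R}[x]\otimes_{R[x]} P$ is free, of some rank $r$. The hypothesis gives that $R(x)\otimes_{R[x]} P$ is $R(x)$-free; tensoring further with $\overline{R}(x) = R(x)/(\mathfrak{m}{\cdot}R(x))$ and comparing with $\overline{R}(x)\otimes_{\overline{R}[x]}\overline{P}$ shows that this rank is also $r$. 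Thus $P$ is free ``on the special fibre'' $\overline{R}[x]$ and ``at infinity'' $R(x)$, with matching rank $r$, and the goal is to upgrade this to freeness over $R[x]$.

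First I would isolate the following sufficient condition: it is enough to produce one $R[x]$-linear map $\phi: R[x]^r \to P$ that is an isomorphism both modulo $\mathfrak{m}[x]$ (i.e. over $\overline{R}[x]$) and after inverting monic polynomials (i.e. over $R(x)$). Indeed, granting such a $\phi$, its cokernel $N$ is finitely $R[x]$-generated with $R(x)\otimes_{R[x]} N = 0$, so $N$ is annihilated by a monic $g$; since $R[x]/(g)$ is a finite free $R$-module, $N$ is finitely $R$-generated, and as $\overline{R}[x]\otimes N = 0$ gives $\mathfrak{m}{\cdot}N = N$, Lemma~\ref{basics}$(iv)$ forces $N = 0$. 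Then $\phi$ is a surjection onto the projective module $P$, hence splits, and its kernel is a finitely generated projective summand $R[x]^r{\cdot}E'$ with $R(x)\otimes(\text{kernel}) = 0$; the idempotent $E'$ therefore vanishes over $R(x)$, and since monic polynomials are not zerodivisors the map $R[x]\to R(x)$ is injective, so $E' = 0$ and $\phi$ is an isomorphism.

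The construction of $\phi$ is the heart of the matter, and I expect it to be the main obstacle. I would begin with any lift $\phi_0: R[x]^r \to P$ of an $\overline{R}[x]$-basis of $\overline{P}$ (automatically an isomorphism over $\overline{R}[x]$) and any $R(x)$-trivialization $\theta: R(x)^r \xrightarrow{\sim} R(x)\otimes P$. Their comparison $\lambda := \theta^{-1}\circ(R(x)\otimes\phi_0)$ is a square matrix over $R(x)$ whose reduction $\overline{\lambda}$ is invertible over the field $\overline{R}(x)$. The difficulty is that $\lambda$ itself need not be invertible over $R(x)$: the ring $R(x)$ is \emph{not} local (it can carry maximal ideals other than $\mathfrak{m}{\cdot}R(x)$), so invertibility modulo $\mathfrak{m}{\cdot}R(x)$ does not imply invertibility, and no naive Nakayama argument is available over $R[x]$ or $R(x)$. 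This is exactly the point at which Lemma~\ref{basics}$(iii)$ is designed to intervene.

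To overcome it, I would exploit the freedom in the lift: replacing $\phi_0$ by $\phi_0 + \psi$ with $\psi \in \operatorname{Hom}_{R[x]}(R[x]^r, \mathfrak{m}{\cdot}P)$ does not disturb the reduction modulo $\mathfrak{m}[x]$, and changes $\lambda$ by $\theta^{-1}\circ(R(x)\otimes\psi)$. Since monic denominators are units in $R(x)$, clearing them shows that these corrections realize \emph{every} matrix with entries in $\mathfrak{m}{\cdot}R(x)$. Now invoke Lemma~\ref{basics}$(iii)$ to lift the invertible matrix $\overline{\lambda}$ to an invertible $\Lambda \in {}^r R(x)^r$ with $\overline{\Lambda} = \overline{\lambda}$; then $\Lambda - \lambda$ has all entries in $\mathfrak{m}{\cdot}R(x)$ and is therefore attainable by such a correction. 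The corrected $\phi$ satisfies $\theta^{-1}\circ(R(x)\otimes\phi) = \Lambda \in GL_r(R(x))$, hence is an isomorphism over $R(x)$ while remaining an isomorphism over $\overline{R}[x]$, and the second paragraph then finishes the proof. The delicate bookkeeping is confined to this gluing step; everything else is formal.
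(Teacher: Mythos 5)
Your first two paragraphs are correct: the rank bookkeeping is fine, and the reduction is sound (if some $\phi\colon R[x]^r\to P$ is an isomorphism over $\overline R[x]$ and over $R(x)$, then the monic-annihilator/Nakayama argument kills the cokernel, and the idempotent argument kills the kernel). The gap is in your fourth paragraph, at the sentence ``Since monic denominators are units in $R(x)$, clearing them shows that these corrections realize \emph{every} matrix with entries in $\mathfrak{m}{\cdot}R(x)$.'' This is false. A correction $\psi\in\operatorname{Hom}_{R[x]}(R[x]^r,\mathfrak{m}{\cdot}P)$ changes $\lambda$ by $\theta^{-1}\circ(R(x)\otimes\psi)$, and the image of the $i$-th standard basis vector under this change is $\theta^{-1}(1\otimes\psi(e_i))$ with $\psi(e_i)\in\mathfrak{m}{\cdot}P$. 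So the realizable corrections are exactly the matrices whose rows lie in the $R[x]$-submodule $\mathfrak{m}\cdot\theta^{-1}(1\otimes P)$ of $R(x)^r$, a \emph{denominator-free} lattice; they are not all matrices with entries in $\mathfrak{m}{\cdot}R(x)$. The confusion is between $\mathfrak{m}\cdot(R(x)\otimes_{R[x]}P)$, whose elements are sums $\sum f_i\otimes q_i$ with $q_i\in\mathfrak{m}{\cdot}P$ and genuine monic denominators in the $f_i$, and $1\otimes\mathfrak{m}{\cdot}P$, which is where your corrections actually land; clearing denominators goes the wrong way, since the correction you must realize is $\Lambda-\lambda$ itself, not $g\cdot(\Lambda-\lambda)$ for a monic $g$. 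Concretely, take $P=R[x]$, $r=1$, $\theta=\mathrm{id}$, $\phi_0=\mathrm{id}$, so $\lambda=1$: the realizable corrections form $\mathfrak{m}[x]$, yet $\Lambda=\frac{x+m}{x}$ (with $0\ne m\in\mathfrak{m}$) is a perfectly good invertible lift of $\overline\lambda=1$ whose required correction $\frac{m}{x}$ does not lie in $\mathfrak{m}[x]$. Thus an \emph{arbitrary} invertible lift $\Lambda$ supplied by Lemma~\ref{basics}$(iii)$ need not be attainable, and your argument gives no way to choose an attainable one.

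What your scheme actually needs is a strengthened lifting statement: an invertible $\Lambda$ over $R(x)$ with $\overline\Lambda=\overline\lambda$ \emph{and} $\Lambda-\lambda$ lying in the denominator-free lattice $\mathfrak{m}\cdot\theta^{-1}(1\otimes P)$ (for instance, entries in $g\cdot\mathfrak{m}[x]$ for a suitable monic $g$). Proving that refinement is the real content of Horrocks' theorem, and it is precisely what the paper's proof supplies --- multiplicatively rather than additively. There, Lemma~\ref{basics}$(iii)$ is used only to normalize: $A':=A{\cdot}U^{-1}$, $B':=U{\cdot}B$ keep the exact identities $A'{\cdot}B'=E$, $B'{\cdot}A'=I_m$ while forcing $A',B'$ to have entries in $R[x]\oplus\mathfrak{m}{\cdot}R(x)_\circ$. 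The hard step is then the polynomial-part argument: Nakayama (Lemma~\ref{basics}$(iv)$) applied to ${}^m\mkern-3muR[x]^m/[{}^m\mkern-3muR[x]^n{\cdot}A']$ (finitely $R$-generated via the monic-denominator trick) produces $F'$ over $R[x]$ with $[F'{\cdot}A']=I_m$; then $\operatorname{Det}(F'{\cdot}A')$ lies in the multiplicative group $1+\mathfrak{m}{\cdot}R(x)_\circ$, so $F'{\cdot}A'$ is invertible; and the identities $B''=F'{\cdot}E$ and $A''=E{\cdot}G''$ finally place the corrected matrices over $R[x]$. None of that is dispensable bookkeeping; your proposal has no substitute for it, so as it stands the proof is incomplete at its central step.
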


\begin{proof}[Proof $($Paul Roberts$)$] We use the notation of 
Lemma~\ref{basics} above.

By Review~\ref{rev}$(iii)$ above, there exist an \mbox{$n \in \NN$} and an idempotent  
\mbox{$E \in \null^n\mkern-3muR[x]\mkern0mu^n$} such 
that \mbox{$P \underset{R[x]}{\simeq} R[x]\mkern2mu^n{\cdot}E$}; hence,
\mbox{$R(x) \otimes_{R[x]}\mkern-3muP  \underset{R(x)}{\simeq} R(x)\mkern2mu^n{\cdot}E$}. 
By Review~\ref{rev}$(iv)$ above, since \mbox{$R(x) \otimes_{R[x]} P$} is $R(x)$-free, 
  there exist an \mbox{$m  \in \NN$}, an \mbox{$A \in \null^n\mkern-3muR(x)\mkern0mu^{m}$}, 
and a \mbox{$B \in \null^{m}\mkern-3muR(x)\mkern0mu^n$}
such that \mbox{$A{\cdot}B = E$} and \mbox{$B{\cdot}A = I_{m}$}. 
Also  by Review~\ref{rev}$(iv)$ above, to show that $P$~is $R[x]$-free it suffices to show that
 there exist   an
\mbox{$A'' \in \null^n\mkern-3muR[x]\mkern0mu^m$} and a \mbox{$B'' \in \null^m\mkern-3muR[x]\mkern2mu^n$}   such that
\mbox{$A''{\cdot}B'' = E$} and \mbox{$B''{\cdot}A'' = I_m$}.

Here,  \mbox{$\overline E \in \null^n\mkern-2mu\overline R[x]\mkern0mu^n$}.
Now $\overline R[x]$ is a PID; hence,  finitely generated, projective 
\mbox{$\overline R[x]$}-modules are \mbox{$\overline R[x]$}-free. 
By Review~\ref{rev}$(iv)$ above, there exist an \mbox{$m' \in \NN$}, a 
 \mbox{$\hat C \in \null^n\mkern-3mu\overline R[x]\mkern0mu^{m'}$}, and a 
\mbox{$\hat D \hskip-1pt\in \hskip-1pt \null^{m'}\mkern-0mu\overline R[x]\mkern0mu^n$}
such that \mbox{$\hat C{\cdot}\hat D = \overline E$} and
\mbox{$\hat D{\cdot}\hat C = \overline{I_{m'}}$}, where the circumflexes  are  reminders 
that the matrices are over~$\overline R[x]$. 
Over the field $\overline R(x)$, we then have  \mbox{$\overline{A}{\cdot}\overline{B} = \overline{E} = 
\hat{C}{\cdot}\hat{D}$},
while \mbox{$\overline{B}{\cdot}\overline{A} = \overline{I_{m}}$} and  
 \mbox{$\hat{D}{\cdot}\hat{C} = \overline{I_{m'}}$}.
Now \mbox{$ \hat{D}{\cdot}(\overline{A} {\cdot} \overline{B}){\cdot}\hat{C}  
= \hat{D}{\cdot}(\hat{C}{\cdot}\hat{D}){\cdot}\hat{C} = \overline{I_{m'}}{\cdot}\overline{I_{m'}}$}
and \mbox{$\overline{B}{\cdot}(\hat{C}{\cdot}\hat{D}){\cdot} \overline{A}  = 
\overline{B}{\cdot}(\overline{A} {\cdot} \overline{B}){\cdot} \overline{A} 
= \overline{I_{m}}{\cdot}\overline{I_{m}}.$} Hence, over the field \mbox{$\overline R(x)$},
the matrices  \mbox{$\hat{D}{\cdot}\overline{A}$}
and \mbox{$\overline{B}{\cdot}\hat{C}$} are mutually inverse, 
so square; thus,  \mbox{$m' = m$}. 
By Lemma~\ref{basics}$(iii)$ above, 
there  exists some invertible \mbox{$U  \in \null^m\mkern-3mu R(x)\mkern0mu^m$} such that
\mbox{$\overline{U} = \hat{D}{\cdot}\overline{A}$} (and \mbox{$\overline{U^{-1}} = \overline{B}{\cdot}\hat{C}$}).  
 Over \mbox{$R(x)$}, set \mbox{$A':= A{\cdot\,}U^{-1}$} and \mbox{$B':= U{\cdot} B$};
then \mbox{$A'{\cdot}B' = E$}  and   \mbox{$B'{\cdot}A' = I_m$}. 
Over~\mbox{$\overline R(x)$},  we then have 
\mbox{$\overline{A'} 
= \overline{A}{\cdot}\overline{U^{-1}}
= \overline{A}{\cdot} \overline{B}{\cdot}\hat{C} = \hat{C}{\cdot}\hat{D}{\cdot}\hat{C} 
= \hat{C}{\cdot} \overline{I_m} 
= \hat{C}$}  and \mbox{$ \overline{B'} 
=  \overline{U}{\cdot}\overline{B} 
=  \hat{D}{\cdot}\overline{A}{\cdot}\overline{B} 
=  \hat{D}{\cdot}\hat{C}{\cdot}\hat{D}
 = \overline{I_m}{\cdot}\hat{D} = \hat{D}.$} 
Thus,  \mbox{$\overline{A'}$}, \mbox{$\overline{B'}$} are matrices over \mbox{$\overline R[x]$},
which means that  \mbox{$A'$}, \mbox{$B'$}  are matrices over the subring \mbox{$R[x] + \mathfrak{m}{\cdot} R(x)$}
of \mbox{$R(x)$}.  By Lemma~\ref{basics}$(i)$ above,   \mbox{$R(x)= R[x]\oplus R(x)_\circ$} as $R$-modules.  Hence,
 \mbox{$R[x] + \mathfrak{m}{\cdot} R(x) = R[x]\oplus \mathfrak{m}{\cdot}R(x)_\circ$}, and,
 over this subring of $R(x)$,  we  have   \mbox{$A'{\cdot}B' = E$}  and  
 \mbox{$B'{\cdot}A' = I_m$}.  

Set
\mbox{$[\null^m\mkern-3muR[x]\mkern0mu^n{\cdot}A']:= \{[F{\cdot}A'] : F \in \null^m\mkern-3muR[x]\mkern0mu^n\} \subseteq 
\null^m\mkern-3muR[x]\mkern0mu^m$}, where the  polynomial part of \mbox{$F{\cdot}A'$} is taken coordinate-wise. 
We claim that \mbox{$[\null^m\mkern-3muR[x]\mkern0mu^n{\cdot}A'] = \null^m\mkern-3muR[x]\mkern0mu^m$}.
Consider any \mbox{$G \in \null^m\mkern-3muR[x]\mkern0mu^m$};
it suffices to show that \mbox{$G \in  [\null^m\mkern-3muR[x]\mkern0mu^n{\cdot}A']$}.
There exists a monic$\mkern8mu$\mbox{$h \in R[x]$} such that \mbox{$h{\cdot}B' \in \null^m\mkern-3muR[x]\mkern0mu^n$}.
Now   \mbox{$[\null^m\mkern-3muR[x]\mkern0mu^n{\cdot}A'] \ni [(G{\cdot}(h{\cdot}B')){\cdot}A'] 
= [G{\cdot} h{\cdot}I_m ] = G{\cdot}h$}; 
hence, \mbox{$[\null^m\mkern-3muR[x]\mkern0mu^n{\cdot}A'] \supseteq \null^m\mkern-3muR[x]\mkern0mu^m{\cdot}h$}.
Here, 
 \mbox{$\null^m\mkern-3muR[x]\mkern0mu^m/(\null^m\mkern-3muR[x]\mkern0mu^m{\cdot}h)$}  is  finitely 
$R$-generated, as it is a finitely generated  \mbox{$(R[x]/(R[x]{\cdot}h))$}-mod\-ule  and $h$ is monic.
Hence, \mbox{$\null^m\mkern-3muR[x]\mkern0mu^m/[\null^m\mkern-3muR[x]\mkern0mu^n{\cdot}A']$} too is  
finitely $R$-generated.   By Lemma~\ref{basics}$(iv)$ above, to prove 
  \mbox{$[\null^m\mkern-3muR[x]\mkern0mu^n{\cdot}A'] = \null^m\mkern-3muR[x]\mkern0mu^m$},  it  suffices to show  
 \mbox{$G \in \mathfrak{m}{\cdot}\null^m\mkern-3muR[x]\mkern0mu^m  {+} [\null^m\mkern-3muR[x]\mkern0mu^n{\cdot}A']$}.
In \mbox{$\overline R[x]$},

\vspace{-5mm}

\begin{align*}
\overline G = \overline{G{\cdot}I_m} = \overline{G{\cdot}B'{\cdot}A'} = 
\overline{G{\cdot}B'}{\cdot}\overline {A'} = \overline{[G{\cdot}B']}{\cdot}\overline {A'}
= \overline{[G{\cdot}B'] {\cdot} A'} = \overline{[[G{\cdot}B'] {\cdot} A']}.
\end{align*}

 \vspace{-1mm}

\noindent Hence, each coordinate  of  \mbox{$G{-}[[G{\cdot}B']{\cdot}A']$} lies in
\mbox{$R[x] \cap \mathfrak{m}{\cdot}R(x) = \mathfrak{m}{\cdot}R[x]$}.  This shows \linebreak that
\mbox{$G = (G{-}[[G{\cdot}B']{\cdot}A']){+}[[G{\cdot}B']{\cdot}A'] \in 
\mathfrak{m}{\cdot}\null^m\mkern-3muR[x]\mkern0mu^m {+} [\null^m\mkern-3muR[x]\mkern0mu^n{\cdot}A']$},
as desired; thus, we have \mbox{$[\null^m\mkern-3muR[x]\mkern0mu^n{\cdot}A'] =
 \null^m\mkern-3muR[x]\mkern0mu^m$}.  The  point of this paragraph is that 
 we may now fix an \mbox{$F'  \in \null^m\mkern-3muR[x]\mkern0mu^n$} such that  
  \mbox{$[F'{\cdot}A'] = I_m$}.  By symmetry, 
we may  fix a \mbox{$G'  \in \null^n\mkern-3muR[x]\mkern0mu^m$} such that
  \mbox{$[B'{\cdot}G'] = I_m$}. 

Notice that each coordinate 
  of \mbox{$F'{\cdot}A' {-} I_m$}
 lies in  \mbox{$(R[x]\,{\oplus}\,\mathfrak{m}{\cdot}R(x)_\circ) \cap R(x)_\circ = \mathfrak{m}{\cdot} R(x)_\circ$};
thus,  \mbox{$\operatorname{Det}(F'{\cdot}A') \in 1{+} \mathfrak{m}{\cdot} R(x)_\circ$}.
Now \mbox{$ 1{+} \mathfrak{m}{\cdot} R(x)_\circ =
 \{\frac{f}{g} \in R(x) : f, g\,\,{\rm monic}, \overline f = \overline g\}$},   which
is a multiplicative group.  Over \mbox{$R[x] \oplus \mathfrak{m}{\cdot}R(x)_\circ$}, 
  \mbox{$F'{\cdot}A'$} is therefore invertible, and 
we may form  \mbox{$A'':= A'{\cdot}\,(F'{\cdot}A')^{-1}$} and 
\mbox{$B'':=  (F'{\cdot}A'){\cdot}B'$}; then   \mbox{$A''{\cdot}B''   =   E$} and \mbox{$B''{\cdot}A'' = I_m$}.
As before, we may   fix a
\mbox{$G'' \in \null^n\mkern-3muR[x]\mkern0mu^m$} such that
  \mbox{$[B''{\cdot}G''] = I_m$}.
Then
\mbox{$B'' =   F'{\cdot}A' {\cdot}B'  = F'{\cdot}E\in \null^m\mkern-3muR[x]\mkern0mu^n$}
and   \mbox{$A'' = A''{\cdot}I_m =
 A''{\cdot}[B''{\cdot}G''] =   A'' {\cdot} B''{\cdot} G'' = 
E {\cdot} G'' \in \null^n\mkern-3muR[x]\mkern0mu^m$}, as desired.  
\end{proof}


By Quillen's extension theorem
\cite[Theorem\,1]{Quillen1976}, \cite[Theorem]{Roitman1979}, \linebreak if $R$ is a commutative ring 
and $P$ is a finitely presented \mbox{$R[x]$}-module such that,
for each \mbox{$\mathfrak{m} \in \operatorname{Max}(R)$}, 
 \mbox{$R_{\mathfrak{m}}[x]{\otimes}_{R[x]}P$} is \mbox{$R_{\mathfrak{m}}[x]$}-free  (or, more generally,
\mbox{$R_{\mathfrak{m}}[x]$}-isomorphic to
  \mbox{$R_{\mathfrak{m}}[x]{\otimes\hspace{-1pt}}_{R_{\mathfrak{m}}}\hspace{-2pt} P_{\mathfrak{m}}$}
for some \mbox{$R_{\mathfrak{m}}$}-module \mbox{$P_{\mathfrak{m}}$}),
then $P$ is \mbox{$R[x]$}-isomorphic to \mbox{$R[x]{\otimes_R} P\mkern1mu'$} for some \mbox{$R$}-module $P\mkern1mu'$. 
Here, all that we shall  prove is the domain   case of the result obtained by combining Quillen's extension theorem
 with Theorem~\ref{th:Horr} above. 
 
\begin{theorem}\label{th:q}  If $R$ is a domain   
and  $P$ is a finitely
generated, projective $R[x]$-module such  that
\mbox{$R_{\mathfrak{m}}(x){\otimes}_{R[x]}P\,$} is  \mbox{$R_{\mathfrak{m}}(x)$}-free
  for \vspace{-2.4mm} each \mbox{$\mathfrak{m} \in \operatorname{Max}(R)$}, then
\mbox{$P \underset{R[x]}{\simeq} R[x]\otimes_R\mkern-3muP\mkern1mu'$}  for some finitely  generated, 
projective $R$-module $P\mkern1mu'$.
\end{theorem}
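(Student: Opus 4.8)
The plan is to localize at each maximal ideal of $R$, use Theorem~\ref{th:Horr} to upgrade $R_{\mathfrak{m}}(x)$-freeness to $R_{\mathfrak{m}}[x]$-freeness, feed the resulting local freeness into Quillen's extension theorem to descend $P$ to an $R$-module $P\mkern1mu'$, and finally recognize $P\mkern1mu'$ as finitely generated and projective. The genuine content is housed in Theorem~\ref{th:Horr} and in Quillen's extension theorem, both of which we are entitled to quote; what remains is to splice them together correctly.

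First I would fix $\mathfrak{m} \in \operatorname{Max}(R)$ and set $P_{\mathfrak{m}} := R_{\mathfrak{m}}[x]\otimes_{R[x]}P$. Since localization carries finitely generated, projective modules to finitely generated, projective modules, $P_{\mathfrak{m}}$ is a finitely generated, projective $R_{\mathfrak{m}}[x]$-module, and $R_{\mathfrak{m}}$ is a local commutative ring. By associativity of the tensor product, $R_{\mathfrak{m}}(x)\otimes_{R_{\mathfrak{m}}[x]}P_{\mathfrak{m}} \underset{R_{\mathfrak{m}}(x)}{\simeq} R_{\mathfrak{m}}(x)\otimes_{R[x]}P$, which is $R_{\mathfrak{m}}(x)$-free by hypothesis. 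Theorem~\ref{th:Horr}, applied with the local commutative ring $R_{\mathfrak{m}}$ in place of $R$ and with $P_{\mathfrak{m}}$ in place of $P$, then yields that $P_{\mathfrak{m}}$ is $R_{\mathfrak{m}}[x]$-free.

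Next, I note that $P$, being a finitely generated, projective $R[x]$-module, is finitely presented. The previous paragraph shows that $R_{\mathfrak{m}}[x]\otimes_{R[x]}P$ is $R_{\mathfrak{m}}[x]$-free for every $\mathfrak{m}\in\operatorname{Max}(R)$, which is exactly the hypothesis of Quillen's extension theorem; applying it produces an $R$-module $P\mkern1mu'$ with $P \underset{R[x]}{\simeq} R[x]\otimes_R P\mkern1mu'$. To see that $P\mkern1mu'$ is finitely generated and projective over $R$, I would invoke part~$(ii)$ of the opening Definitions and Notation with the ring $R[x]$, its subring $R$, and the ideal $J := x\mkern1mu R[x]$: the natural map $R \to R[x]/(x\mkern1mu R[x])$ is bijective, so, since $P$ is a finitely generated, projective $R[x]$-module, $P\mkern1mu'$ is a finitely generated, projective $R$-module, as required.

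I do not expect any single step to be a serious obstacle, precisely because the two hard theorems are already in hand; the only points needing care are the transitivity identity $R_{\mathfrak{m}}(x)\otimes_{R_{\mathfrak{m}}[x]}(R_{\mathfrak{m}}[x]\otimes_{R[x]}P) \simeq R_{\mathfrak{m}}(x)\otimes_{R[x]}P$ that matches the hypothesis of Theorem~\ref{th:Horr}, and the observation that a finitely generated projective module is finitely presented. It is worth remarking that the domain hypothesis is never actually used: the same argument goes through verbatim for an arbitrary commutative ring $R$, since each $R_{\mathfrak{m}}$ is automatically a local commutative ring. The word \emph{domain} here merely singles out the case relevant to the intended Quillen--Suslin application.
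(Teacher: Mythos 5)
Your splicing is logically sound, and the two points you flag as needing care are indeed handled correctly: the transitivity isomorphism $R_{\mathfrak{m}}(x)\otimes_{R_{\mathfrak{m}}[x]}(R_{\mathfrak{m}}[x]\otimes_{R[x]}P) \simeq R_{\mathfrak{m}}(x)\otimes_{R[x]}P$, and the descent of finite generation and projectivity along the retraction $R[x] \to R[x]/(x\mkern1mu R[x]) \simeq R$ via part~$(ii)$ of the opening Definitions and Notation. The problem is what you chose to quote. In these notes, Quillen's extension theorem is only \emph{stated}, with citations to Quillen and Roitman; it is never proved, and the paper says explicitly that the theorem you were asked to prove \emph{is} ``the domain case of the result obtained by combining Quillen's extension theorem with Theorem~\ref{th:Horr}.'' So a proof that invokes the extension theorem as a black box assumes exactly the content this theorem is meant to supply: within the paper's self-contained development of Quillen--Suslin, your argument leaves the patching step unproven, and that patching step is the genuine missing idea.

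The paper's own proof carries out that patching with idempotent matrices. Write $P \underset{R[x]}{\simeq} R[x]^n{\cdot}E$ for an idempotent $E$ over $R[x]$, and set $J := \{\, j \in R : E^{(x\mapsto x+j\cdot y)} \underset{R[x,y]}{\sim} E \,\}$; one checks $J$ is an ideal of $R$. For each $\mathfrak{m}$, Theorem~\ref{th:Horr} is used exactly as you use it to produce matrices $A$, $B$ over $R_{\mathfrak{m}}[x]$ with $A{\cdot}B = E$ and $B{\cdot}A = I$; then $C := A{\cdot}B^{(x\mapsto x+y)}$ and $D := A^{(x\mapsto x+y)}{\cdot}B$ are formed, denominators are cleared by some $r \in R{-}\mathfrak{m}$, and a short computation shows $E \underset{R[x,y]}{\sim} E^{(x\mapsto x+r\cdot y)}$, i.e.\ $r \in J$. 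Hence $J \nsubseteq \mathfrak{m}$ for every $\mathfrak{m}$, so $1 \in J$, and substituting $y \mapsto -x$ gives $E \underset{R[x]}{\sim} E^{(x\mapsto 0)}$, an idempotent over $R$, which is the conclusion. Note also that your closing remark that the domain hypothesis ``is never actually used'' is true only of your outsourced version: the paper's proof uses it essentially, to view $R$ as a subring of $R_{\mathfrak{m}}$, so that $E$ and the cleared-denominator matrices live in a common ring and the relation $r \in J$ makes sense. That is precisely why the theorem is stated for domains rather than for arbitrary commutative rings.
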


\begin{proof}[Proof  $($Quillen, Vaserstein, Roitman$\mkern3mu)$] 
Set  \mbox{$R[x,y]:= R[x][y] = R[y][x]$}. 
 For   each   \mbox{$f  \in R[x,y]$}, we write  
\mbox{$(y \mapsto f){:\,}R[x,y] \to R[x,y]$}, \mbox{$g \mapsto g^{(y\mapsto f)}$}, to
denote the unique ring endomorphism of $R[x,y]$ which sends $y$ to $f$ and fixes 
each element of $R[x]$.  We define   \mbox{$(x \mapsto f)$} analogously. 
We extend these maps to matrices coordinate-wise.
 
 By 
Review~\ref{rev}$(iii)$ above,
there exist an \mbox{$n \in \NN$} 
and an idempotent  
\mbox{$E \in \null^n\mkern-3muR[x]\mkern0mu^n$} such that  \mbox{$P \underset{R[x]}{\simeq} R[x]\mkern2mu^n{\cdot}E$}.
Set \mbox{$J:= \{j \in R \mid E^{(x\mapsto x+j{\cdot}y)}  \underset{R[x,y]}{\sim}\\ E\}$}. \vspace{.5mm}
 
We  now show that $J$ is an ideal of $R$.  Clearly \mbox{$0 \in J$}.  
Consider any $j, j' \in J$. Here,   \mbox{$E^{(x\mapsto x+j{\cdot}y)} \underset{R[x,y]}{\sim} E$},
and    applying   \mbox{$(x \mapsto x+j'{\cdot}y)$} yields
 \mbox{$E^{(x \mapsto x +j'{\cdot}y+j{\cdot}y)} \underset{R[x,y]}{\sim}
E^{(x \mapsto x + j'{\cdot}y)}$}.  Since    \mbox{$E^{(x\mapsto x+j'{\cdot}y)} \underset{R[x,y]}{\sim} E$},
we see   that  \mbox{$E^{(x \mapsto x +j'{\cdot}y+j{\cdot}y)} \underset{R[x,y]}{\sim}
E$}, whence \mbox{$j+j' \in J$}.
Now consider any \mbox{$j \in J$} and \mbox{$r \in R$}.  Since $E$ is a matrix over $R[x]$,
 applying \mbox{$(y \mapsto r{\cdot}\,y)$} to \mbox{$E^{(x\mapsto x+j{\cdot}y)}\underset{R[x,y]}{\sim} E$} 
   yields 
 \mbox{$E^{(x \mapsto x + j{\cdot}r{\cdot}y)}\underset{R[x,y]}{\sim} E$}, whence \mbox{$j{\cdot}r \in J$}.  
Hence,  $J$ is an ideal of~$R$.

Consider an arbitrary   \mbox{$\mathfrak{m} \in \operatorname{Max}(R)$}. 
By hypothesis, 
\mbox{$R_{\mathfrak{m}}(x)  \otimes_{R_\mathfrak{m}[x]} ( R_\mathfrak{m}[x] {\otimes}_{R[x]} P)$} 
is  \linebreak\mbox{$R_{\mathfrak{m}}(x)$-free}.  Then,  by Theorem~\ref{th:Horr} above,  
\mbox{$R_\mathfrak{m}[x]{\otimes}_{R[x]}P$} is \mbox{$R_{\mathfrak{m}}[x]$-free}.  
We may view $R$ as a subring of~\mbox{$R_{\mathfrak{m}}$}, since $R$~is a domain.   
By Review~\ref{rev}$(iv)$ above, 
 there exist matrices \linebreak $I$, $A$, and $B$ over \mbox{$R_{\mathfrak{m}}[x]$}  such that $I$ is an  identity matrix, 
\mbox{$A{\cdot}B = E$}, and \mbox{$B{\cdot}A = I$}.  
We  extend our ring-endo\-mor\-phism notation from \mbox{$R[x,y]$} to \mbox{$R_{\mathfrak{m}}[x,y]$},
and set \mbox{$C:= A{\cdot}B^{(x \mapsto x{+}y)}$} and
\mbox{$D:= A^{(x \mapsto x{+}y)}{\cdot}B$}.  
There exists some \mbox{$r \in R{-}\mathfrak{m}$} such that \mbox{$r{\cdot}C$} and \mbox{$r{\cdot}D$} 
are matrices over \mbox{$R[x,y]$}.
On viewing $C$ as a polynomial in $y$ with coefficients that are matrices over \mbox{$R_{\mathfrak{m}}[x]$},
we see that the constant term is \mbox{$C^{(y \mapsto 0)} = A{\cdot}B = E$}, which is a matrix over  \mbox{$R[x]$}.  
Hence,  \mbox{$C^{(y \mapsto r{\cdot}y)}$} is a matrix  over \mbox{$R[x,y]$}.  Similarly,
  \mbox{$D^{(y \mapsto r{\cdot}y)}$} is a matrix  over \mbox{$R[x,y]$}.  Over \mbox{$R_{\mathfrak{m}}[x,y]$}, 
\begin{align*}
C^{(y \mapsto r{\cdot}y)}{\cdot}D^{(y \mapsto r{\cdot}y)} &= 
A{\cdot}B^{(x \mapsto x{+}r{\cdot}y)}{\cdot} A^{(x \mapsto x{+}r{\cdot}y)}{\cdot}B = 
A{\cdot}(B{\cdot}A)^{(x \mapsto x+r{\cdot}y)} {\cdot}B = A{\cdot} I {\cdot}B = E,
\\D^{(y \mapsto r{\cdot}y)}{\cdot}C^{(y \mapsto r{\cdot}y)} 
&= A^{(x \mapsto x{+}r{\cdot}y)}{\cdot}B{\cdot}A{\cdot} B^{(x \mapsto x{+}r{\cdot}y)} =
A^{(x \mapsto x{+}r{\cdot}y)}{\cdot}I{\cdot} B^{(x \mapsto x{+}r{\cdot}y)} = E^{(x \mapsto x + r{\cdot}y)}.
\end{align*}

\vspace{-.2cm}

\noindent  Hence,
  \mbox{$E \underset{R[x,y]}{\sim} E^{(x \mapsto x + r{\cdot}y)}$}; that is, \vspace{-1mm} 
\mbox{$r \in J$}.  Since   \mbox{$r \not\in \mathfrak{m}$}, we have \mbox{$J \nsubseteq \mathfrak{m}$}. 
Since \mbox{$\mathfrak{m} \in \operatorname{Max}(R)$} is arbitrary,  \mbox{$1 \in J$}; that is,
\mbox{$E^{(x \mapsto x + y)} \underset{R[x,y]}{\sim} E$}.
Thus, \vspace{-1mm}  there exist matrices \mbox{$A_0$}, \mbox{$B_0$}  over \mbox{$R[x,y]$}
such that \mbox{$A_0{\cdot}B_0 = E^{(x \mapsto x+y)}$} and
\mbox{$B_0{\cdot}A_0 = E$}.   
Then \mbox{$A_0^{(y \mapsto -x)}$} and {$B_0^{(y \mapsto -x)}$} are matrices over~\mbox{$R[x]$},
\mbox{$A_0^{(y \mapsto -x)}{\cdot}B_0^{(y \mapsto -x)} = 
(E^{(x \mapsto x+y)})^{(y \mapsto -x)} = E^{(x \mapsto 0)}$}, and 
\mbox{$B_0^{(y \mapsto -x)}{\cdot}A_0^{(y \mapsto -x)} = E^{(y \mapsto -x)} = E$}.  
Thus, \mbox{$E^{(x \mapsto 0)} \underset{R[x]}{\sim} E$},  
which   is the desired conclusion, by Review~\ref{rev}$(iv)$ above.  
\end{proof}

We now prove the result of \cite[Theorem $4'$]{Quillen1976} and  \cite[Theorem 3]{Suslin1976}.
 
\newpage

\begin{qs}\label{th:qs} Let \mbox{$n \in \NN$}, let $K$ be a Dedekind \mbox{domain},
let  $R$ denote \mbox{$K[x_1,\ldots,x_n]$}, and let $P$~be a finitely generated,
projective  \mbox{$R$}-mod\-ule.  Then \mbox{$P \underset{R}{\simeq} R \otimes_{K}\mkern-3muP_0$}
for some finitely generated, projective $K$-module $P_0$.  If $K$ is a PID, then
$P_0$ is $K$-free and $P$ is  \mbox{$R$}-free.
\end{qs}

\noindent \textit{Proof $($Quillen$)$}.   
We argue by induction on~$n$.  As the result is clear for \mbox{$n=0$},   
we may assume that \mbox{$n \ge 1$} and that the result holds  with \mbox{$n{-}1$} in place of~$n$.

Set \mbox{$X:= \{x_2,\ldots, x_n\}$}. 
Consider an arbitrary \mbox{$\mathfrak{m} \in \operatorname{Max}(K[X])$}.  There then exists some   
 \mbox{$\mathfrak{m}' \in \operatorname{Max}(K)$} such that $\mathfrak{m} \cap K \subseteq \mathfrak{m}'$.
By hypothesis,  $P$ is a finitely generated, projective $K[x_1][X]$-mod\-ule. 
We view  \mbox{$K[x_1][X] \subseteq K_{\mathfrak{m}'}(x_1)[X]$}.
 Then
 \mbox{$K_{\mathfrak{m}'}(x_1)[X]\otimes_{K[x_1][X]}P$}  is a finitely generated, projective
\mbox{$K_{\mathfrak{m}'}(x_1)[X]$}-mod\-ule; 
it is \mbox{$K_{\mathfrak{m}'}(x_1)[X]$}-free by
 the induction hypothesis, since \mbox{$K_{\mathfrak{m}'}(x_1)$} is a PID by Lemma~\ref{basics}$(v)$ above,
since $K_{\mathfrak{m}'}$ is a local PID.
 We now view \mbox{$K_{\mathfrak{m}'}(x_1)[X] \subseteq K[X]_{\mathfrak{m}}(x_1)$}. 
Then  \mbox{$K[X]_{\mathfrak{m}}(x_1)\otimes_{K_{\mathfrak{m}'}(x_1)[X]}\hskip-1.8pt(K_{\mathfrak{m}'}
(x_1)[X]\otimes_{K[x_1][X]}P)$} is \mbox{$K[X]_{\mathfrak{m}}(x_1)$}-free;
that is,  \mbox{$K[X]_{\mathfrak{m}}(x_1)\otimes_{K[X][x_1]}\hskip-1.8ptP$} is
\mbox{$K[X]_{\mathfrak{m}}(x_1)$}-free.  Since \mbox{$\mathfrak{m} \in \operatorname{Max}(K[X])$} is arbitrary,
Theorem~\ref{th:q} above now shows that   
 \mbox{$P \mkern-5mu \underset{K[X][x_1]}{\simeq} K[X][x_1] \otimes_{K[X]} \mkern-3mu P\mkern1mu'$}
for some finitely generated, projective \mbox{$K[X]$}-module $P\mkern1mu'$.  
By the induction hypothesis   again,  
 \mbox{$P\mkern1mu'\mkern-5mu\underset{K[X]}{\simeq} K[X] \otimes_K P_0$} for some finitely generated, projective \mbox{$K$}-module 
$P_0$, and then
  \mbox{$P\mkern-5mu\underset{K[X][x_1]}{\simeq} K[X][x_1]\otimes_K\mkern-3muP_0$}. \qed


\bibliographystyle{plain}

\begin{thebibliography}{16}

\bibitem[Bass\,1973]{Bass1973} Hyman Bass,  \textit{Some problems in ``classical" algebraic K-theory.}
pp.\,3--73 in  \textit{Algebraic K-theory\,II: ``Classical" algebraic K-theory and connections with arithmetic 
$($Proc.\,conf., Battelle Memorial Inst., Seattle, Wash., $1972)$}. Lecture Notes in Math.\,\textbf{342},
Springer-Verlag, Berlin-New York, 1973. 


\bibitem[Dicks\,1977]{Dicks1977} Warren Dicks, \textit{The proof of Serre's conjecture from a purely ring theoretic aspect}.
Seminar handout,   3~pages.

\bibitem[Horrocks\,1964]{Horrocks1964}  G.\,Horrocks, \textit{Projective modules over an extension of a local ring}.
Proc.\,London Math.\,Soc.  (3) \textbf{14}, 14--18.

\bibitem[Lam\,2006]{Lam2003}  T.\,Y.\,Lam, \textit{Serre's problem on projective modules}.
 Springer Monographs in Mathematics, Springer\d1Ver\-lag, Berlin, xxii+401 pp. 





 \bibitem[Quillen\,1976]{Quillen1976} Daniel Quillen, \textit{Projective modules over polynomial rings}. 
Invent.\,Math.\,\textbf{36}, 167--171.
Received Monday, March 1, 1976.

\bibitem[Roberts\,1976]{Roberts1976} Paul Roberts, \textit{Proof of Horrocks' theorem}.  Handwritten note, 6  pages.

\bibitem[Roitman\,1979]{Roitman1979} Moshe Roitman, \textit{On projective modules over polynomial rings}.
J.\,Algebra \textbf{58},   51--63.    \mbox{Received}  Tuesday,  August 30, 1977.

\bibitem[Serre\,1955]{Serre1955}
Jean-Pierre Serre,  \textit{Faisceaux alg\'ebriques coh\'erents}. Ann.\,of Math.\,(2) \textbf{61}, 197--278. 

\bibitem[Suslin\,1976]{Suslin1976} A.\,A.\,Suslin, \textit{Projective modules over polynomial rings are free} (in Russian).
 Dokl.\,Akad.\,Nauk SSSR \textbf{229},   1063-1066.  
Received Thursday, February 26, 1976.
\end{thebibliography}

\end{document}